\newtheorem{thm}[equation]{Theorem}
\newtheorem{lem}[equation]{Lemma}
\newtheorem{prop}[equation]{Proposition}
\DeclareMathOperator{\tr}{tr}
\numberwithin{equation}{section}
\newcommand\G{\Gamma}
\newcommand\f{\frac}
\newcommand{\Z}{{\mathbb{Z}}}
\newcommand\F{{\mathbb F}}
\renewcommand\i{^{-1}}
\renewcommand\({\left(}
\renewcommand\){\right)}
\newcommand{\gobble}[1]{}
  \newcommand{\rangeref}[2]{%
    \ref{#1}--\afterassignment\gobble\fam 0\ref{#2}%
  }
\begin{document}

\title{Non-degeneracy of Pollard Rho Collisions}

\date{August 31, 2008}
\author{Stephen D. Miller\thanks{Partially supported by NSF grant
DMS-0601009 and an Alfred P. Sloan Foundation Fellowship.
}~\ and\  Ramarathnam Venkatesan}

\maketitle

\begin{abstract}

The Pollard $\rho$ algorithm is a widely used algorithm for solving discrete logarithms on general cyclic groups, including elliptic curves.
Recently the first nontrivial runtime estimates were provided for it, culminating in a sharp $O(\sqrt{n})$ bound for the collision time on a cyclic group of order $n$ \cite{kmpt2}.  In this paper we show that for $n$ satisfying a mild arithmetic condition, the collisions guaranteed by these results are nondegenerate with high probability:~that is, the Pollard $\rho$ algorithm successfully finds the discrete logarithm.

\vspace{.2 cm}

 Keywords:~Pollard Rho algorithm, discrete logarithm,
 random walk, expander graph, collision time, mixing time, spectral
analysis.
\end{abstract}

\section{Introduction}

The Pollard $\rho$ algorithm is, to date, the leading algorithm for solving discrete logarithm problems on general groups, including  elliptic curves.  The algorithm can be stated as follows.  Let $G$ be a cyclic group of order $n$ generated by the element $g$;~$n$ may assumed to be a large prime because of the Pohlig-Hellman reduction \cite{hellmanpohlig}.  Let $h=g^y$ be the element whose discrete logarithm $y\neq 1$ (unknown) is to be found, and let $x_0=h$ or a random power $g^{r_1}h^{r_2}$ (which turns out to be only slightly less general).  Let $G=S_1 \cup S_2 \cup S_3$ be  a random partition of $G$ into three disjoint subsets, in which each element has a 1/3 probability of belonging to each $S_j$.\footnote{In practice, the assignment is accomplished using a hash function which is expected to behave randomly, as storing the partitions themselves would take up too much memory.  A formal model would assume a cryptographically strong pseudo-random function whose underlying cryptographic primitive would have a security estimate   exceeding the runtime of the Pollard Rho algorithm.  Such implementation details  to justify the random walk model  used in our analysis are well understood.}
Define an iteration $x_{k+1}=f(x_k)$, where
\begin{equation}\label{iteration}
    f(x) \ \ = \ \ \left\{
                     \begin{array}{ll}
                       gx\,, &  \ x\,\in\,S_1\,; \\
                       hx\,, & \ x\,\in\,S_2\,; \\
                       x^2\,, & \ x\,\in\,S_3\,.
                     \end{array}
                   \right.
\end{equation}
At each stage $x_k$ may be written as $g^{a_k y+b_k}$, where the coefficients $a_k,b_k\in \Z/n\Z$ are known.  Iterate until a collision of values $x_k=x_\ell$ has been found, and if the collision is ``non-degenerate'' (meaning $(a_k,b_k)\neq (a_\ell,b_\ell)$), solve for the discrete logarithm using the formula $y=\f{b_\ell-b_k}{a_k-a_\ell}$.

The algorithm is conjectured to run in time $O(\sqrt{n})$ with high probability.  It is the only such algorithm which uses small memory and which works for general groups.  Though faster algorithms are known for specific incarnations of cyclic groups\footnote{For example, index calculus provides a subexponential algorithm on the group $\F_p^*$, which is abstractly isomorphic to a cyclic group of order $n=p-1$.  Note that this is {\em not} itself an example of a prime order cyclic group as treated above:~one must apply the Pohlig-Hellman reduction first.}, a  theorem of Victor Shoup \cite{shoup} asserts that no algorithm on a general group can be faster -- aside from improving the implied multiplicative constant.  For it to be successful, two things must happen:
\begin{enumerate}
                                     \item A collision must be found in time $O(\sqrt{n})$.
                                     \item This collision must be non-degenerate.
                                   \end{enumerate}
Item 1 has been the subject of a number of recent papers, before which there were no nontrivial bounds on the runtime at all.  First, a collision time of $O(\sqrt{n}(\log n)^{3})$ was  shown in \cite{prho}, which was successively improved by  \cite{kmpt1} and \cite{kmpt2} to the optimal $O(\sqrt{n})$ bound.

The purpose of this paper is to address Item 2 for the Pollard $\rho$ algorithm (it is, however, settled for some variants of Pollard $\rho$, as in \cite{horwen}). Unfortunately as of yet we are unable to make the result unconditional, for it depends on the multiplicative order of 2 modulo $n$ (the least positive integer $k$ such that $2^k\equiv 1\pmod n$).
We prove the following result, which is a complete runtime analysis for almost all group orders $n$:

\begin{thm}\label{mainthm}  Consider the Pollard $\rho$ algorithm as above on a group $G=\langle g \rangle$ of prime order $n$, starting at a random point $x_0=g^{r_1}h^{r_2}$. Suppose that the multiplicative order of 2 modulo $n$ is at least $c_0(\log n)^3$, where $c_0$ is the absolute constant coming from Proposition~\ref{hittingsets}.  Then any Pollard $\rho$ collision occurring before time $T$ is nondegenerate with probability at least $1-\f 32 \f{T^2}{n^2}$.  In particular, the collisions guaranteed by \cite{kmpt2} to occur with high probability within time $O(\sqrt{n})$ are nondegenerate with probability at least $1-O(\f 1n)$.
\end{thm}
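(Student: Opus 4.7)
The plan is to exploit the algebraic structure of the exponent pair $(a_k,b_k)\in(\Z/n\Z)^2$ under the iteration $f$. Each of the three branches acts on this pair by a simple affine map: addition of $(0,1)$ on $S_1$, addition of $(1,0)$ on $S_2$, and the doubling $v\mapsto 2v$ on $S_3$. Composing these gives
\[
(a_k,b_k)\ =\ 2^{m_k}(r_2,r_1)\,+\,c_k,
\]
where $m_k$ is the number of $S_3$-steps taken by time $k$ and $c_k\in(\Z/n\Z)^2$ is a deterministic function of the walk history $(\omega_0,\dots,\omega_{k-1})\in\{1,2,3\}^k$, independent of $(r_1,r_2)$. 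A degenerate collision at times $k<\ell\le T$ is therefore equivalent to the linear relation
\[
(2^{m_k}-2^{m_\ell})(r_2,r_1)\ \equiv\ c_\ell-c_k\pmod n.
\]

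I would then split the analysis on whether the multiplicative scaling factor $2^{m_k}-2^{m_\ell}$ vanishes modulo $n$. In the generic case where it is nonzero, the displayed equation, conditional on the walk history, has exactly one solution in $(\Z/n\Z)^2$, giving conditional degeneracy probability $1/n^2$ per pair; the union bound over $\binom{T}{2}$ pairs yields at most $T^2/(2n^2)$. When $2^{m_k}\equiv 2^{m_\ell}\pmod n$ with $m_k=m_\ell$, the interval $[k,\ell)$ contains no squarings and $c_\ell-c_k=(N_2,N_1)$ where $N_i$ counts $S_i$-steps in the interval, with $N_1+N_2=\ell-k$; since $T\ll n$, both $N_i\equiv 0\pmod n$ would force $N_1=N_2=0$, contradicting $\ell>k$, so this subcase contributes nothing.

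The delicate case---and the main obstacle---is when $m_k\ne m_\ell$ but still $2^{m_k}\equiv 2^{m_\ell}\pmod n$, which forces $|m_k-m_\ell|\geq\operatorname{ord}_n(2)\geq c_0(\log n)^3$. Here degeneracy becomes a purely combinatorial condition on the history, namely that two explicit powers-of-$2$-weighted tallies of $S_1$- and $S_2$-steps in $[k,\ell)$ both vanish mod $n$. The hypothesis on $\operatorname{ord}_n(2)$ is essential at exactly this point: it is what licenses the use of Proposition~\ref{hittingsets}, which I would apply to show that the joint event ``$2^{m_k}\equiv 2^{m_\ell}$ and $c_k\equiv c_\ell$'' contributes only $O(T^2/n^2)$ in total over all pairs. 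Combining with the generic case and the trivial subcase via a single union bound then yields the claimed estimate $\tfrac32\,T^2/n^2$.
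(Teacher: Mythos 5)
Your proposal is correct, and it rests on exactly the two ingredients the paper uses, though it organizes the case analysis differently. The affine normal form $(a_k,b_k)=2^{m_k}(r_2,r_1)+c_k$ and the resulting fixed-point dichotomy --- no solution when $m_k=m_\ell$ and the translation is a nonzero vector with small entries, exactly one solution (hence conditional probability $1/n^2$ over the uniform start) when $2^{m_k}\not\equiv 2^{m_\ell}$ --- is precisely the content and proof of the paper's Lemma~\ref{tracebd}, rederived inline; and your delicate third case is exactly where the paper invokes Proposition~\ref{hittingsets}. The difference is the shape of the split: the paper partitions the pairs \emph{deterministically} by gap length, applying Lemma~\ref{tracebd} when $\ell-k<c_0(\log n)^3\le\operatorname{ord}_n(2)$ and Proposition~\ref{hittingsets} when $\ell-k\ge c_0(\log n)^3$, so each ordered pair contributes at most $\tfrac32 n^{-2}$ and the union bound closes immediately; you instead split on the random quantity $s=m_\ell-m_k$, which has the virtue of making the role of the order hypothesis transparent (it is exactly the bridge forcing $\ell-k\ge c_0(\log n)^3$ whenever $2^s\equiv 1$ with $s\ne 0$, i.e.\ the mixing estimate becomes available precisely where the fixed-point count fails). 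The cost is that a single pair can now contribute from two disjoint cases, roughly $\tfrac52 n^{-2}$ per unordered pair, which still sums to $\tfrac54 T^2/n^2\le\tfrac32 T^2/n^2$. Two small points to make explicit when writing this up: in your third case the event forces the affine map over $[k,\ell)$ to be the identity, so it is dominated by the return probability of a walk of length $\ell-k\ge c_0(\log n)^3$ from a fixed vertex, which is what Proposition~\ref{hittingsets} actually bounds; and, as in the paper, one should first observe that all collisions after the first are nondegenerate iff the first one is (by invertibility of the steps in the coefficient graph), since that is what justifies modeling the history up to time $T$ as a free random walk independent of $(r_1,r_2)$.
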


\noindent {\bf Remarks:}  1) Though the probability of nondegeneracy is heuristically higher than that of  collisions, in practice it has been much more difficult to prove nondegeneracy.

2) The multiplicative order of 2 modulo $n$ is typically quite large, e.g.~it equals $n-1$ if 2 generates $(\Z/n\Z)^*$, which it frequently does.   There do exist primes  with multiplicative order the size of $\log n$ (e.g.~Fermat and Mersenne primes), but those disobeying the condition in the theorem are quite rare.  Indeed, we show in Lemma~\ref{afewbadprimes} that at most $O((\log X)^5)$ such primes $p$ exist in the range $X\le p \le 2X$.

3)  Even if $2$ has small multiplicative order modulo $n$, there is always a prime  $\ell=O((\log n)^6(\log \log n))$ which has multiplicative order at least $c_0(\log n)^3$.  This is because a cyclic group has at most $L^2$ elements of order $\le L$.  (We thank the referee for supplying this argument.)
  If the Pollard $\rho$ algorithm is modified to replace the squaring step by $x\mapsto x^\ell$ instead, the analysis here and in \cite{prho,kmpt1,kmpt2} applies and gives a completely rigorous proof of the same $O(\sqrt{n})$ runtime, with the same $1-O(\f 1n)$ success rate.

4) The reason we need to assume a random starting point, unlike in \cite{prho}, is that we cannot rule out degeneracies in collisions occurring within the first few steps.  Lemma~\ref{tracebd}, in particular, applies only to random starting points.  Once the algorithm has proceeded for $c_0(\log n)^3$ steps a random point is reached regardless of the starting point, but we cannot guarantee a random position before then.

The strategy of the proof starts with the viewpoint  that the Pollard $\rho$ iteration can be modeled as a pseudo-random walk on the ``Pollard $\rho$ graph'':~the graph whose vertices are elements of $G$, and whose (directed) edges have the form
\begin{equation}\label{prhograph}
    x \ \longrightarrow \ xg, \,xh, \ \text{or} \  x^2.
\end{equation}
Indeed, until a collision occurs the iteration by (\ref{iteration}) is in fact a random walk, because the destination from a vertex $x$ depends only on its random assignment to one of the $S_j$; however, it is important that the walk is no longer random after this point, for it enters a loop.  The coefficients $(a_k,b_k)\in(\Z/n\Z)^2$ meanwhile likewise can be modeled as a random walk (until the time of collision) on the following ``Pollard $\rho$ coefficient graph'':
\begin{equation}\label{prhocoeffgraph}
    (a,b) \ \longrightarrow \ (a+1,b), \,(a,b+1),  \ \text{or} \ (2a,2b).
\end{equation}
This graph maps onto the graph (\ref{prhograph}) by $(a,b)\mapsto g^{ay+b}$, where $y$ is the (secret and unknown) exponent of $h=g^y$.

Our argument has two main ingredients.  The first is a spectral upper bound on the mixing time of this graph, which roughly speaking shows that the coefficients $(a_k,b_k)$ become equidistributed after a small number of steps.  This is very similar to the argument in \cite{prho} to guarantee collisions among the $x_k$.  That alone, however, is not enough to show nondegeneracy:~it is important to note that {\em undirected} 3-regular graphs can have this equidistribution feature, while simultaneously having $(a_k,b_k)$ equaling $(a_{k+2},b_{k+2})$ with probability $\ge 1/3$ (for example, going backwards on the edge just traveled).  The second ingredient, an estimate on the number of short cycles, handles this.  It is this part which depends on the condition on the multiplicative order of 2 modulo $n$, and hence which is not completely general.

We conclude this section with the proof of Theorem~\ref{mainthm}, which depends on estimates of the last two sections.  Section 2, roughly speaking, deals with long random walks, while Section 3 with short random walks.  The condition on the multiplicative order of 2 modulo $n$ is needed to make sure their intervals of applicability overlap.

\begin{proof}[Proof of Theorem~\ref{mainthm}]
Once a collision occurs, all future collisions are nondegenerate if and only if the first one was; this is because of the invertibility of the steps in (\ref{prhocoeffgraph}).  Thus, it suffices to assume that no collision has occurred until time $T$, which allows us to model the coefficients $(a_k,b_k)$, $k\le T$, using a random walk on (\ref{prhocoeffgraph}).
Because the starting point $x_0=g^{r_1}h^{r_2}$ is uniformly distributed and walk up to time $T$ is random, the values of each $(a_k,b_k)$ are themselves uniformly distributed.
We show in Proposition~\ref{hittingsets} and Lemma~\ref{tracebd} that for any $m>0$ and a random point $(a,b)\in(\Z/n\Z)^2$,  a random walk of length $m$ starting at $(a,b)$  ends at $(a,b)$  with probability at most  $\f{3}{2}\f{1}{n^2}$.  By the union bounds, the probability of a degeneracy $(a_k,b_k)=(a_\ell,b_\ell)$ occurring for some distinct $k,\ell \le T$  is bounded above by
\begin{equation}\label{pf1}
    \sum_{\scriptstyle{\stackrel{k,\ell \le T}{k\neq \ell}}} P[(a_k,b_k)=(a_\ell,b_\ell)] \ \ \le \ \
     \sum_{\scriptstyle{\stackrel{k,\ell \le T}{k\neq \ell}}}  \f{3}{2}\f{1}{n^2} \ \ < \ \ \f{3}{2}\f{T^2}{n^2}\,.
\end{equation}
\end{proof}

It is a pleasure to acknowledge Ravi Montenegro, Ze'ev Rudnick, Adi Shamir, and Prasad Tetali for their helpful discussions.  We  also thank Curt McMullen for helpful comments concerning the remark at the end of Section~\ref{sec:mix}, and the anonymous referee for their  suggestions for improving the paper, in particular  Remark 3 above.

\section{Mixing time estimates}\label{sec:mix}

Let $A$ denote the adjacency operator of the graph (\ref{prhocoeffgraph}):~it is defined on complex-valued functions $f$ on $(\Z/n\Z)^2$ by the formula
\begin{equation}\label{Aongraph}
    Af(a,b) \ \ = \ \ f(2a,2b) \ + \ f(a+1,b) \ + \ f(a,b+1)\,.
\end{equation}
Such functions themselves form a complex vector space of dimension $n^2$, which is equipped with the usual inner product and norm
\begin{equation}\label{L2V}
    \langle f_1,f_2 \rangle \ \ = \ \ \sum_{a,b\,\in\, \Z/n\Z} f_1(a,b)\,\overline{f_2(a,b)} \  \, , \ \ \ \ \|f\|^2 \ \ = \ \ \langle f,f \rangle\,.
\end{equation}
Of special interest to us is the restriction of $A$ to $\mathbbm{1}^\perp$, the orthogonal complement of the constant function $\mathbbm{1}$ (the functions on the graph whose average value is zero).
The following result relates the operator norm properties of this restriction of $A$, to the mixing properties of the random walk on the graph:

\begin{lem}\label{mixlem}(\cite[Lemma 2.1]{prho})
Let $\G$ denote a directed graph on the vertex set $V$, having both $d$ directed edges entering and exiting each vertex (including multiplicity).
Suppose that there exists a constant $\mu<d$ such that $\| A f \|
\le \mu\| f \|$ for all $f\in \mathbbm{1}^\perp$. Let $S$ be an arbitrary subset of $V$.  Then the
number of paths of length $r \ge \f{\log(2n)}{\log(d/\mu)}$ which
start from any given vertex and end in $S$ is between $\f12 d^r
\f{|S|}{|V|}$ and $\f32 d^r \f{|S|}{|V|}$.
\end{lem}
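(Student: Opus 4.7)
The plan is to express the number of length-$r$ paths from $v$ ending in $S$ as $(A^r \mathbbm{1}_S)(v)$, where $\mathbbm{1}_S$ denotes the indicator function of $S$. Decomposing $\mathbbm{1}_S = \f{|S|}{|V|}\mathbbm{1} + g$ with $g\in\mathbbm{1}^\perp$, and using that every vertex has out-degree $d$ so that $A\mathbbm{1}=d\,\mathbbm{1}$ and $A^r\mathbbm{1} = d^r\,\mathbbm{1}$, the main term is $d^r |S|/|V|$. It remains to bound the error $(A^r g)(v)$ in absolute value by $\tfrac12 d^r|S|/|V|$, which will give the two-sided sandwich in the statement.

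To convert the $L^2$ operator bound $\|Af\|\le\mu\|f\|$ into a pointwise estimate at $v$, I would write $\delta_v = \f{1}{|V|}\mathbbm{1}+h$ with $h\in\mathbbm{1}^\perp$ and expand
\[
(A^r g)(v) \ = \ \langle A^r g,\delta_v\rangle \ = \ \langle A^r g,h\rangle \, + \, \f{1}{|V|}\langle g,(A^*)^r\mathbbm{1}\rangle.
\]
The in-degree hypothesis gives $A^*\mathbbm{1} = d\,\mathbbm{1}$, so the second summand vanishes since $g\perp\mathbbm{1}$; the same identity also shows that $A$ preserves $\mathbbm{1}^\perp$, so the spectral hypothesis iterates to $\|A^rg\|\le\mu^r\|g\|$. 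Cauchy--Schwarz then yields $|(A^rg)(v)|\le\mu^r\|g\|\,\|h\|\le\mu^r\sqrt{|S|}$, using the elementary bounds $\|g\|^2 = |S|(1-|S|/|V|)\le|S|$ and $\|h\|^2 = 1-1/|V|\le 1$.

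To finish I would substitute the hypothesis $r\ge\log(2|V|)/\log(d/\mu)$, which is equivalent to $(\mu/d)^r\le 1/(2|V|)$, and combine with $\sqrt{|S|}\le|S|$ (valid for $|S|\ge 1$) to obtain $|(A^rg)(v)|\le\tfrac12 d^r|S|/|V|$. Together with the main term $d^r|S|/|V|$, this sandwiches $(A^r\mathbbm{1}_S)(v)$ between $\tfrac12 d^r|S|/|V|$ and $\tfrac32 d^r|S|/|V|$, as required.

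The one delicate step I anticipate is the passage from an $L^2$ operator-norm estimate to a pointwise statement at the chosen starting vertex, accomplished by dualizing through $\delta_v$. This is where the in-degree assumption $A^*\mathbbm{1} = d\,\mathbbm{1}$ is genuinely needed rather than cosmetic: without it, the orthogonal decompositions of $\mathbbm{1}_S$ and of $\delta_v$ would not cleanly separate the main term from the error, and $A$ itself might fail to preserve $\mathbbm{1}^\perp$, which would prevent iteration of the spectral hypothesis. Everything else is routine linear algebra and bookkeeping.
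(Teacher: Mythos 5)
Your proof is correct and is essentially the same spectral argument as the one given for \cite[Lemma 2.1]{prho} (the paper only cites the lemma rather than reproving it): decompose $\mathbbm{1}_S$ into its mean plus a component in $\mathbbm{1}^\perp$, use the in-degree hypothesis to see that $A$ preserves $\mathbbm{1}^\perp$ so the operator bound iterates, and control the error term by Cauchy--Schwarz together with the condition $(\mu/d)^r\le 1/(2|V|)$. All the estimates ($\|g\|^2\le|S|$, $\sqrt{|S|}\le|S|$, and the pointwise evaluation bounded by the $L^2$ norm) check out, so nothing further is needed.
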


Thus   sufficiently long random walks hit a set $S$ with probability between $\f{1}{2}\f{|S|}{|V|}$ and $\f{3}{2}\f{|S|}{|V|}$, independent of their starting point.
Unfortunately this Lemma does not apply directly to our situation, because it can happen that $\|A f\|=\|f\|$ for some functions $f$.  However, to show that random walks mix it suffices to work two steps at a time; fortunately, a nontrivial operator norm estimate applies to $A^2$ instead, which corresponds to the adjacency operator for the graph on $(\Z/n\Z)^2$ with edges
\begin{multline}\label{rho2step}
    (a,b) \ \longrightarrow (4a,4b),\,(2a+1,2b),\,(2a,2b+1),
    \,(2a+2,2b),\,(2a,2b+2),\\ (a+1,b+1),\,(a+1,b+1),\,
    (a+2,b), \ \text{or} \ (a,b+2).   \ \
\end{multline}

\begin{prop}\label{spectralbd}
With $A$ denoting the adjacency operator of the graph (\ref{prhocoeffgraph}) and the standing assumption that $n$ is an odd prime, there exists an absolute constant $c>0$ such that
\begin{equation}\label{a2bd}
    \|A^2f\| \ \ \le \ \ \(3-\f{c}{(\log n)^2}\)^2\|f\| \  , \ \ f \,\in\,\mathbbm{1}^\perp.
\end{equation}
\end{prop}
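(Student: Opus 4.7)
\medskip

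\noindent\emph{Proof proposal.} The plan is to decompose $A$ using Fourier analysis on $(\Z/n\Z)^2$ and bound $\|A^2\|$ on each orbit block separately. Write $A = T + M_1 + M_2$, where $Tf(a,b) = f(2a,2b)$ and $M_1, M_2$ are the two unit translations. In the character basis $\chi_{s,t}(a,b) = \omega^{sa+tb}$ with $\omega = e^{2\pi i/n}$, the operators $M_1$ and $M_2$ are simultaneously diagonal (with eigenvalues $\omega^s$ and $\omega^t$), while $T$ merely permutes characters as $T\chi_{s,t} = \chi_{2s,2t}$. Passing to Fourier coefficients yields
\[
\widehat{Af}(s,t) \ = \ \hat f(s/2, t/2) \ + \ (\omega^s + \omega^t)\,\hat f(s,t),
\]
so $A$ preserves each subspace $V_O \subset L^2((\Z/n\Z)^2)$ of functions whose Fourier support lies in a single orbit $O$ of the doubling map $D:(s,t)\mapsto(2s,2t)$. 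Since $n$ is prime, every nonzero orbit has size $L := \mathrm{ord}_n(2)$ and $\{(0,0)\}$ is the only fixed point, corresponding to $\mathbbm 1$ (on which $A$ acts as the scalar $3$). Hence $\mathbbm 1^\perp = \bigoplus_{O \neq \{0\}} V_O$, and it will suffice to bound $\|A^2|_{V_O}\|$ uniformly in $O$.

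Parametrizing $V_O$ by the Fourier coefficient sequence $g(k) := \hat f(p_k)$ along the orbit $O = \{p_k = 2^k p_0\}_{k \in \Z/L\Z}$, the restriction $A|_{V_O}$ is identified with $C + D_\phi$ on $\ell^2(\Z/L\Z)$, where $Cg(k) = g(k-1)$ is the cyclic shift and $D_\phi = \mathrm{diag}(\phi_k)$ with $\phi_k = \omega^{s_k} + \omega^{t_k}$. On any orbit where $\sup_k |\phi_k|^2 \le 4 - \eta$ for some fixed $\eta > 0$, an elementary expansion of $\|(C + D_\phi) g\|^2$ already yields a gap of order $\eta$ at the level of $A$ itself, hence also for $A^2$. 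The obstructing case is the diagonal orbit $s_0 = t_0$, on which $|\phi_k|\equiv 2$ and in fact $\|A|_{V_O}\| = 3$ exactly: writing $\phi_k = 2\omega^{s_k}$, the unitary $\Phi := C^{-1} D_\phi / 2$ satisfies $\Phi^L = I$ (because $2^L \equiv 1 \pmod n$), so $(A|_{V_O})^\ast (A|_{V_O}) = 5I + 2(\Phi + \Phi^\ast)$ has spectrum $\{5 + 4\cos(2\pi j/L)\}_{j=0}^{L-1}$ with top eigenvalue $9$.

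On the diagonal orbit the required gap therefore has to come from the square. A short computation gives $A v_0 = 3\, C v_0$ for the top singular vector $v_0$ of $A|_{V_O}$, and one checks that the component of $C v_0$ in the (one-dimensional) top singular subspace equals the character sum $\tfrac{1}{L}\sum_k \omega^{s_k} = \tfrac{1}{L}\sum_k \omega^{2^k s_0}$. Provided this sum is bounded away from unit modulus, the second application of $A|_{V_O}$ loses a fixed fraction of the norm; combined with the spectral gap $8\sin^2(\pi/L) \gtrsim 1/L^2$ of $(A|_{V_O})^\ast (A|_{V_O})$, this produces a gap of order $1/L^2 \gtrsim 1/(\log n)^2$ for $\|(A|_{V_O})^2\|^2$. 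The required deviation of $\tfrac{1}{L}\sum_k \omega^{2^k s_0}$ from unit modulus is precisely the quantity controlled by the Chung--Diaconis--Graham-type spectral gap for the Cayley graph of $\Z/n\Z$ with generators $\{+1, \times 2\}$ underlying Lemma~\ref{mixlem}. A standard two-case dichotomy --- on whether an arbitrary unit vector $w \in V_O$ lies near the top singular subspace of $A|_{V_O}$ --- then promotes the estimate for $v_0$ into the uniform bound on $\|(A|_{V_O})^2\|$.

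The main obstacle is this quantitative analysis on the diagonal orbit: although $\|A|_{V_O}\| = 3$ is saturated there, one must still extract a $1/(\log n)^2$-size gap from the square, via the delicate interaction between the $O(1/L^2)$ spectral gap of $(A|_{V_O})^\ast (A|_{V_O})$ and the character-sum cancellation along the doubling orbit $\{2^k s_0\}$. I expect this to be the technical heart of the proof, and the $(\log n)^{-2}$ scaling is consistent with the Chung--Diaconis--Graham mixing rate on $\Z/n\Z$ that underlies the earlier arguments in \cite{prho,kmpt1,kmpt2}.
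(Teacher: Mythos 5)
Your Fourier set-up is the same as the paper's (there $A=D+P$ with $D$ diagonal in the character basis and $P$ the doubling permutation), and your block decomposition into doubling orbits, together with the exact computation $\|A|_{V_O}\|=3$ on the diagonal orbit via the unitary $\Phi$ with $\Phi^L=I$, is correct and nicely explains why one must pass to $A^2$. However, the argument has two genuine gaps. First, the dichotomy ``diagonal orbit versus all others'' is wrong: on the orbit of $(s_0,t_0)=(1,0)$ one has $|\phi_0|^2=2+2\cos(2\pi/n)=4-O(n^{-2})$, so no fixed $\eta>0$ exists, and the elementary bound $\|C+D_\phi\|\le 1+\sup_k|\phi_k|$ yields only $3-O(n^{-2})$, far weaker than the required $3-c(\log n)^{-2}$. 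Every orbit whose difference coordinate $s_0-t_0$ has small elements in its doubling orbit needs the same cancellation analysis as the diagonal one; accordingly, the paper applies its key inequality $\sum_k x_kx_{2k}|\cos(\pi k/n)|\le(1-c'(\log n)^{-2})\sum_k x_k^2$, quoted from \cite[Prop. 3.1]{prho}, to all $n+1$ lines through the origin, not just to $k=\ell$.

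Second, the mechanism you propose on the diagonal orbit cannot produce the required bound when $L=\mathrm{ord}_n(2)$ is large, which is the typical case ($L$ can be as large as $n-1$). The spectral gap of $(A|_{V_O})^*(A|_{V_O})=5I+2(\Phi+\Phi^*)$ below $9$ is $8\sin^2(\pi/L)\asymp L^{-2}$, which is $\asymp n^{-2}$ for $L\asymp n$; your chain $1/L^2\gtrsim 1/(\log n)^2$ holds only when $L\lesssim\log n$, i.e.\ precisely in the atypical case. In the two-case dichotomy, the ``far from the top singular subspace'' branch then contributes a gap of size at most $O(L^{-2})$, so no choice of threshold makes both branches deliver $c(\log n)^{-2}$. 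The $(\log n)^{-2}$ in the proposition does not come from the cyclic spectral gap of $\Phi$ but from the combinatorics of the weights $|\cos(\pi k/n)|$ along a doubling orbit (they cannot stay near $1$ for more than $O(\log n)$ consecutive doublings), which is exactly what \cite[Prop. 3.1]{prho} encodes. The paper avoids both difficulties by a cruder route: it expands $A^{*^2}A^2$ into sixteen terms, bounds fourteen of them by operator norms ($\|X_2\|\le 71$), and reduces the remaining two, line by line over all lines through the origin, to that inequality; to repair your argument you would need to import the same input and apply it on every orbit, not only the diagonal one.
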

\begin{proof}

Any function $f$ on the vertices may be expanded in terms of the additive
characters $\chi_{k,\ell}(x,y)=e^{2\pi i (k x+\ell y)/n}$:
\begin{equation}\label{characterexpansion}
    f \ =  \ \sum_{k,\ell\,\in\, \Z/n\Z} \,c_{k,\ell}\,
\chi_{k,\ell}\,.
\end{equation}
The condition that $f\in \mathbbm{1}^\perp$ is equivalent to
$c_{0,0}=0$.  The action of $A$ on the character $\chi_{k,\ell}$ is
given by
\begin{equation}\label{Aonchar}
    A\,\chi_{k,\ell} \ = \ d_{k,\ell}\, \chi_{k,\ell} \, + \,
    \chi_{2k,2\ell}\,,
\end{equation}
where
\begin{equation}\label{dkldef}
\ \, \ \ \ d_{k,\ell} \
    = \ e^{2\pi i k/n}+e^{2\pi i \ell/n}\,.
\end{equation}
Thus $A$ is the sum of
the diagonal operator
$D:\chi_{k,\ell}\mapsto d_{k,\ell}\,\chi_{k,\ell}$ and the
permutation operator $P:\chi_{k,\ell}\mapsto \chi_{2k,2\ell}$.  The adjoint of $A$ under the inner product (\ref{L2V})   is $A^*=\overline{D}+P\i$.   Let us write
\begin{equation}\label{adjcalc}
\aligned
    A^{*^2}A^2 \ \ = &  \ \ (\overline{D}^2 + P\i \overline{D} + \overline{D} P\i +
    P^{-2})(D^2+PD+DP+P^2) \\
    = & \ \ X_1 \ + \ X_2\,,
\endaligned
\end{equation}
where $X_1=\overline{D}^2PD+\overline{D}P$, and $X_2$ is the remaining sum of 14 terms from the expansion of the first line.  Because  $|d_{k,\ell}|=2|\cos(\f{\pi(k-\ell)}{n})|\le 2$ and in fact equals 2 when $k=\ell$, the operator norms of
$D$ and $\overline{D}$ are $\|D\|=\|\overline{D}\|=2$. Likewise $\|P\|=\|P\i\|=1$, because $P$ preserves norms.  It follows from the sum of 14 terms defining $X_2$ that $\|X_2\|\le 71$.
Using this fact and Cauchy-Schwartz, we get the bound
\begin{equation}\label{adjcalc2}
\aligned
    \|A^2f\|^2 \ \ = & \ \ \langle f,A^{*^2}A^2f \rangle \ \le \
    71 \, \|f\|^2 \  + \  |\langle f,(\overline{D}^2PD+\overline{D}P)f\rangle|\,.
\endaligned
\end{equation}
In order to prove (\ref{a2bd}) it now suffices to show the bound
\begin{equation}\label{adjcalc25}
\aligned
 |\langle f,(\overline{D}^2PD+\overline{D}P)f \rangle| \ \ \le &  \ \ \(10-\f{c}{(\log n)^{2}}\)\, \|f\|^2 \ \ \\ =&  \ \ \(10-\f{c}{(\log n)^{2}}\)\,n^2\sum_{(k,\ell)\neq(0,0)} |c_{k,\ell}|^2\,,
 \endaligned
\end{equation}
 for some
absolute constant $c>0$.  Here  we have used (\ref{characterexpansion}) as well as  the inner product relation
\begin{equation}\label{innerprodofchar}
    \langle \chi_{k,\ell},\chi_{k',\ell'} \rangle  \ \ = \ \ \left\{
\begin{array}{ll}
                                                                 n^2\,, &
(k,\ell)=(k',\ell') \\
                                                                 0\ , &
\hbox{otherwise.}
                                                               \end{array}
                                                             \right.
\end{equation}
Since
\begin{equation}\label{adjcalc33}
    (\overline{D}^2PD+\overline{D}P)\chi_{k,\ell} \   =  \ \mu_{k,\ell}\,\chi_{2k,2\ell}\ , \ \ \ \mu_{k,\ell} \ = \  \overline{d_{2k,2\ell}}^{\,2}  d_{k,\ell}+\overline{d_{2k,2\ell}}\,,
\end{equation}
we have that
\begin{equation}\label{adjcalc3}
    |\langle f,(\overline{D}^2PD+\overline{D}P)f \rangle| \ \  \le \ \ n^2
\sum_{(k,\ell)\neq(0,0)} \,|c_{k,\ell}|\,|c_{2k,2\ell}|\,|\mu_{k,\ell}|\,.
\end{equation}
We now group the indices  $(k,\ell)\neq (0,0)$ into the $n+1$  lines through the origin in $(\Z/n\Z)^2$.  Using the bounds
\begin{equation}\label{adjcalc4}
\aligned
   | \mu_{k,\ell}| \ \ \le & \ \ 8 \,+\, 2|\cos \textstyle{\f{2(k-\ell)\pi}{n}}|\, , \ \ \ \text{for the lines with~}k\neq \ell\,,
   \\
  | \mu_{k,k}| \ \ \le & \ \ 6 \,+\,4|\cos\textstyle{\f{ k \pi}{n}}  |\,,
   \endaligned
\end{equation}
and the fact that 2 is invertible modulo $n$,
the desired bound (\ref{adjcalc25}) reduces to the estimates
\begin{equation}\label{adjcalc5}
\aligned
     \sum_{k\,=\,1}^{n-1}x_k\,x_{2k} \ \ \le & \ \ \sum_{k\,=\,1}^{n-1}x_k^2 \ \ \ \ \ \ \ \qquad \qquad  \ \qquad \text{and}
    \\
    \sum_{k\,=\,1}^{n-1}x_k\,x_{2k}\,|\cos \textstyle{\f{\pi k}{n}}| \  \ \le & \ \ \(1 \,-\,\f{c'}{(\log n)^2}\)\,\sum_{k\,=\,1}^{n-1}x_k^2\,, \ \text{
    for some absolute $c'>0$,}
\endaligned
\end{equation}
for any real numbers $x_1,\ldots,x_{n-1}$.
The first inequality follows from $x_k x_{2k}\le \f 12 (x_k^2+x_{2k}^2)$, and the second  is proven in
\cite[Prop. 3.1]{prho}.
\end{proof}

Combining these, we have shown

\begin{prop}\label{hittingsets}
There exists an absolute positive constant $c_0$ such that the number of paths of length $r\ge c_0(\log n)^3$ on the graph (\ref{prhocoeffgraph}) which begin at the vertex $(a,b)$ and end at the vertex $(a',b')$ is between $\f 12 \f{3^r}{n^2}$ and  $\f 32 \f{3^r}{n^2}$.  In particular, the probability of a random walk of length $r\ge c_0(\log n)^3$ ending at its starting point is at most $\f{3}{2}\f{1}{n^2}$.
\end{prop}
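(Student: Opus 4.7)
The plan is to apply Lemma~\ref{mixlem} not directly to $A$ but to $A^{2}$, viewed as the adjacency operator of the $9$-regular two-step graph (\ref{rho2step}) on the vertex set $V=(\Z/n\Z)^2$. The reason for this is exactly the one flagged after Lemma~\ref{mixlem}: the one-step operator $A$ can satisfy $\|Af\|=\|f\|$ on $\mathbbm{1}^\perp$ (bipartite-type eigenvectors), so a direct application is unavailable. By contrast, Proposition~\ref{spectralbd} gives the nontrivial bound $\|A^{2}f\|\le \mu \|f\|$ for $f\in \mathbbm{1}^\perp$, where $\mu=\(3-\f{c}{(\log n)^2}\)^{2}<9$.

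First I would invoke Lemma~\ref{mixlem} with parameters $d=9$, $\mu$ as above, $|V|=n^{2}$, and the singleton set $S=\{(a',b')\}$. The lemma requires walks of length $r'\ge \f{\log(2n^2)}{\log(9/\mu)}$. A short Taylor expansion shows
\begin{equation*}
\log\!\(\f{9}{\mu}\) \ = \ 2\log\!\(\f{3}{3-c/(\log n)^{2}}\) \ = \ \f{2c}{3(\log n)^{2}} \,+\, O\!\(\f{1}{(\log n)^{4}}\)\,,
\end{equation*}
so the threshold is $r' = O((\log n)^{3})$. Setting $c_0$ to be twice an appropriate constant, Lemma~\ref{mixlem} then yields, for every even $r\ge c_0(\log n)^{3}$, that the number of length-$r$ walks in the original graph (\ref{prhocoeffgraph}) from $(a,b)$ to $(a',b')$ lies between $\f12\f{3^{r}}{n^{2}}$ and $\f32\f{3^{r}}{n^{2}}$, as claimed.

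To handle odd $r\ge c_0(\log n)^{3}$, I would decompose any length-$r$ walk into an initial single step from $(a,b)$ to one of its three out-neighbors $v$, followed by a walk of even length $r-1$ from $v$ to $(a',b')$. Summing the even-length estimate over the three choices of $v$ gives total counts between $3\cdot \f12\f{3^{r-1}}{n^{2}}=\f12\f{3^{r}}{n^{2}}$ and $3\cdot \f32\f{3^{r-1}}{n^{2}}=\f32\f{3^{r}}{n^{2}}$, so the same bounds hold. Finally, the probability that a length-$r$ random walk ends where it started equals the number of such closed walks divided by $3^{r}$, which by the above is at most $\f32\f{1}{n^{2}}$.

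I expect the only real obstacle to be the bookkeeping around Proposition~\ref{spectralbd}: one must check that the spectral bound on $A^{2}$ restricted to $\mathbbm{1}^\perp$ transfers to the operator-norm hypothesis Lemma~\ref{mixlem} requires for the two-step graph (the constant function on $V$ is preserved, so $\mathbbm{1}^\perp$ is $A^{2}$-invariant), and that the choice of $c_0$ absorbs the additive constant from reducing odd $r$ to even $r-1$. Everything else is a Taylor expansion together with a trivial splitting argument.
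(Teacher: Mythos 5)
Your proposal is correct and is essentially the paper's own argument: the paper offers no separate proof of Proposition~\ref{hittingsets} beyond the phrase ``Combining these,'' meaning exactly your combination of Lemma~\ref{mixlem} (applied to the $9$-regular two-step graph (\ref{rho2step}) with $\mu=(3-c/(\log n)^2)^2$) and Proposition~\ref{spectralbd}. Your explicit handling of the details the paper leaves implicit --- the Taylor expansion giving the $O((\log n)^3)$ threshold and the reduction of odd $r$ to even $r-1$ by splitting off one step --- is sound.
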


\noindent {\bf Remarks:} This mixing time estimate for the random walk can also be proven using the method of canonical paths from \cite{montteta}.

Interestingly, the mixing time estimate for the Pollard $\rho$ graph (\ref{prhograph}) in \cite{prho} does not need the step $x\rightarrow xh$: the steps $x\mapsto xg$ and $x\mapsto x^2$ suffice.  This follows from the same method of proof, and is  suggested by the heuristic that the $x\mapsto xh$ step is approximated by the other two.
  However, the Pollard $\rho$ coefficient graph  does not rapidly mix unless all three steps in (\ref{prhocoeffgraph}) are present.  In any case, all three steps are necessary for the execution of the Pollard $\rho$ algorithm.

\section{Trace estimates}\label{sec:trace}

\begin{lem}\label{tracebd}
If $k \ge 1$ is less than the multiplicative order of 2 modulo $n$, then there are precisely $3^k-2^k$ closed cycles on the graph (\ref{prhocoeffgraph}) of length   $k$.  In particular, if $(a,b)$ is a random point in $(\Z/n\Z)^2$, the probability  is at most $\f{1}{n^2}$ that a random walk of length $k$ which starts at $(a,b)$ also ends at $(a,b)$.
\end{lem}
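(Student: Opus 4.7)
The plan is to count closed walks of length $k$ by parametrizing each walk through its sequence of step-choices in $\{E_1,E_2,D\}^k$, where $E_1,E_2,D$ denote the three moves $(a,b)\mapsto(a+1,b)$, $(a,b)\mapsto(a,b+1)$, and $(a,b)\mapsto(2a,2b)$ appearing in (\ref{prhocoeffgraph}). A short induction on $k$ shows that starting from $(a,b)$ and applying the step-sequence $m=(m_1,\ldots,m_k)$ lands at
\begin{equation*}
    \bigl(2^{j(m)}a+A(m),\ 2^{j(m)}b+B(m)\bigr),
\end{equation*}
where $j(m)$ is the number of $D$'s in $m$ and $A(m),B(m)\in\Z$ depend only on $m$ (each translation $E_i$ followed by $r$ subsequent doublings contributes $2^r$ to the appropriate coordinate).

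The closure condition is therefore $(2^{j(m)}-1)a\equiv -A(m)$ and $(2^{j(m)}-1)b\equiv -B(m)\pmod n$, and I would split on $j=j(m)$. If $j=0$, then $A(m)$ and $B(m)$ are nonnegative integers whose sum is $k$, and since $k$ is strictly less than the multiplicative order of $2$ modulo $n$ (in particular $k\le n-2$), both $A(m),B(m)<n$; closure forces $A(m)=B(m)=0$, which is incompatible with $k\ge 1$. If $1\le j\le k$, the hypothesis on the order of $2$ gives $2^j\not\equiv 1\pmod n$, and since $n$ is prime this makes $2^j-1$ invertible in $\Z/n\Z$; hence the closure conditions determine $(a,b)$ uniquely from $m$. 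So each step-sequence containing at least one $D$ yields exactly one closed cycle, and each sequence containing only $E_1,E_2$ yields none.

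Summing over $m\in\{E_1,E_2,D\}^k$ gives precisely $3^k-2^k$ closed cycles. For the probability statement, sampling a random starting vertex together with a random walk produces $n^2\cdot 3^k$ equally likely outcomes, of which $3^k-2^k$ close up, so the desired probability equals
\begin{equation*}
    \frac{3^k-2^k}{n^2\cdot 3^k}\ =\ \frac{1-(2/3)^k}{n^2}\ <\ \frac{1}{n^2}.
\end{equation*}
There is no real obstacle here; the single point requiring care is recognizing that the hypothesis $k<\text{ord}_n(2)$ is exactly what guarantees invertibility of $2^j-1$ for every relevant $j$, which in turn is what makes the starting vertex uniquely recoverable from the step-sequence in the $j\ge 1$ case. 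Without that hypothesis, sequences with $2^j\equiv 1$ could contribute either zero or $n^2$ closed cycles depending on whether $(A(m),B(m))$ happens to vanish, and the clean count $3^k-2^k$ would fail.
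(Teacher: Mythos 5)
Your proof is correct and follows essentially the same route as the paper's: both reduce each length-$k$ step-sequence to an affine map $(x,y)\mapsto 2^{s}(x,y)+(u,v)$ and count fixed points, with the $2^{k}$ doubling-free sequences contributing none and each of the remaining $3^{k}-2^{k}$ contributing exactly one. You even make explicit a point the paper leaves implicit, namely that $u+v=k<n$ forces $(u,v)\not\equiv(0,0)$ in the $s=0$ case.
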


The number of such cycles is also given by $\tr A^k$, where $A$ is the adjacency operator of the graph.  The above estimate, however, does not seem to follow from the spectral techniques of the previous section.

\begin{proof}
Every path involves either doubling the coefficients $(a,b)$, or adding 1 to one of them.  Thus all paths of length $k$ starting from the vertex $(x,y)$ have the form
\begin{equation}\label{tr1}
    T\,: \ (x,y) \ \mapsto 2^s(x,y) \,+(u,v)\,,
\end{equation}
where $s\le k$ equals the number of doubling steps in the path, and $u,v\in \Z/n\Z$ are independent of $x$ and $y$  .  (This characterization obviously holds for $k=1$, and in general by induction.)  A closed cycle is equivalent to a fixed point for $T$.  Of the $3^k$ possible paths starting from $(x,y)$, exactly $2^k$ have $s=0$.  For those walks, $(u,v)\neq (0,0)$ since all steps are of the form $(a,b)\mapsto (a+1,b)$ or $(a,b+1)$.  Thus $T$ has no fixed points in this situation.

However, if $s\neq 0$ then $2^s$ is not congruent to 1 modulo $n$ because of the multiplicative order condition.  In this situation, $T$ has exactly one fixed point.  The closed cycles come from these $3^k-2^k$ cases.
\end{proof}

This concludes the estimates necessary for the proof of Theorem~\ref{mainthm}.
We conclude with the following lemma, which shows that primes for which
2 has multiplicative order smaller than its condition $c_0(\log n)^3$ are extremely rare.

\begin{lem}\label{afewbadprimes}
Let $X>0$, $c>0$, and $B$ the set of primes $p$ in the interval $[X,2X]$ such that the multiplicative order of 2 modulo $p$ is bounded by $c(\log X)^3$.  Then the size of $B$ is bounded by
\begin{equation}\label{sizeofB}
    |B| \ \ \le \ \ \f{c^2 \log(2)}{2} (\log X)^5.
\end{equation}
\end{lem}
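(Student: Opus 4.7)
The plan is to bound $|B|$ by classifying the primes $p \in B$ according to which integer $k = \operatorname{ord}_p(2)$ realizes the order of $2$ modulo $p$, and then for each fixed $k$, bound the number of such primes using elementary divisibility.

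First, I would observe the defining reformulation: $p \in B$ has multiplicative order $k$ of $2$ modulo $p$ if and only if $k$ is the minimal positive integer with $p \mid 2^k - 1$. In particular, every $p \in B$ divides $2^k - 1$ for some positive integer $k \le K$, where $K := \lfloor c (\log X)^3 \rfloor$.

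Next, for each fixed $k$, I would bound the number of primes $p \ge X$ (indeed in $[X, 2X]$) dividing $2^k - 1$. Since any such collection of primes has product at most $2^k - 1 < 2^k$, and each factor is at least $X$, the number of them is at most $\log(2^k)/\log X = k \log 2 / \log X$. Summing over $k = 1, \ldots, K$ and using that each $p \in B$ is counted (at most once, by minimality of its order) for the unique $k = \operatorname{ord}_p(2)$, we get
\begin{equation*}
    |B| \ \ \le \ \ \sum_{k=1}^{K} \f{k \log 2}{\log X} \ \ = \ \ \f{\log 2}{\log X} \cdot \f{K(K+1)}{2}\,.
\end{equation*}
Substituting $K \le c(\log X)^3$ and simplifying yields the asserted bound $|B| \le \f{c^2 \log 2}{2}(\log X)^5$ (the lower-order term from $K+1$ versus $K$ is negligible, and in any event one can absorb it by using $K \le c(\log X)^3$ throughout).

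There is essentially no obstacle here; the argument is completely elementary once one makes the observation that $p \mid 2^{\operatorname{ord}_p(2)} - 1$. The only minor point to be careful about is ensuring one does not double-count a prime that happens to divide several $2^k - 1$; this is avoided by attaching each prime to its unique multiplicative order, which is the minimal such $k$.
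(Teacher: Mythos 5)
Your proposal is correct and is essentially the paper's own argument: both rest on the observation that the distinct primes of $B$ (each at least $X$) divide $\prod_{k\le c(\log X)^3}(2^k-1)$, so taking logarithms of the product bounds $|B|$ by $\f{\log 2}{\log X}\sum_{k\le c(\log X)^3}k$. The only cosmetic difference is that you partition $B$ by the exact order $k=\operatorname{ord}_p(2)$ and bound each class separately, while the paper multiplies all of $B$ together at once; the resulting arithmetic (including the harmless sloppiness about $K(K+1)/2$ versus $K^2/2$) is identical.
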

\begin{proof}
The condition on $p\in B$ states that $p$ divides $\prod_{k \le c(\log X)^3}(2^k-1)$.  Primality therefore implies that
\begin{equation}\label{fewbad2}
    \prod_{p\in B}p \ \ \ \text{divides} \ \ \prod_{k \,  \le \,  c(\log X)^3}(2^k-1)\,,
\end{equation}
and in particular satisfies
\begin{equation}\label{fewbad3}
    X^{|B|} \ \ \le \ \ \prod_{p\in B}p  \ \ \le \ \ \prod_{k \,  \le \, c(\log X)^3}(2^k-1) \ \ \le \ \ 2^{E}\,,
\end{equation}
with
\begin{equation}\label{fewbad4}
    E \ \ = \ \ \sum_{k \,  \le \, c(\log X)^3} k \ \ \le \ \ \f 12 c^2 (\log X)^6\,,
\end{equation}
implying (\ref{sizeofB}).
\end{proof}
In fact this proof shows something slightly stronger, that the bound (\ref{sizeofB}) holds for the number of primes at least $X$ whose multiplicative order is bounded by $c(\log X)^3$.

\vspace{1 cm}

\noindent Addresses:

\vspace{.3 cm}

\noindent Stephen D. Miller
\\ Department of Mathematics \\ 110 Frelinghuysen Road \\
Rutgers, The State University of New Jersey \\ Piscataway, NJ 08854
\\  {\tt miller@math.rutgers.edu}

\vspace{.3 cm}

\noindent Ramarathnam Venkatesan \\ Microsoft Research Cryptography
and Anti-Piracy Group
\\ 1 Microsoft Way \\ Redmond, WA 98052

and
\vspace{-.0cm}
\noindent \\ Cryptography, Security and Applied Mathematics Group
\\Microsoft Research India \\
Scientia - 196/36 2nd Main \\ Sadashivnagar, Bangalore 560 080, India
\\
{\tt
venkie@microsoft.com}

\end{document}